\newtheorem{theo}{\bf Theorem}[section]
\newtheorem{propo}[theo]{\bf Proposition}
\newtheorem{defi}[theo]{\bf Definition}
\newtheorem{coro}[theo]{\bf Corollary}
\begin{document}

\title[Carlitz multipermutations]{Enumeration of Carlitz multipermutations}
\author{Henrik Eriksson}
\address{CSC \\
   KTH \\
   SE-100 44 Stockholm, Sweden}
\email{he@kth.se}
\author{Alexis Martin}
\address{MA \\ 
   EPFL\\
   CH-1015 Lausanne, Switzerland}
\email{alexis.martin@epfl.ch}
\keywords{Carlitz,word,multipermutation}
\subjclass[2010]{05A05}
\date{2016-11-28}

\begin{abstract}
A multipermutation with $k$ copies each of $1\ldots n$ is Carlitz if neighbours
are different. We enumerate these objects for $k=2,3,4$ and derive recurrences.
In particular, we prove and improve a conjectured recurrence for $k=3$,
stated in OEIS, the Online Encyclopedia of Integer Sequences.
\end{abstract}
\maketitle

\section{Introduction}
\noindent
Leonard Carlitz \cite{Carlitz} enumerated compositions with
adjacent parts being different. We will count multipermutations
of $1^k,2^k,\ldots,n^k$ with the same condition. 
\begin{defi}
A multipermutation is {\em Carlitz} if adjacent elements are different.
\end{defi}

\noindent
For $k=1$, these
are just the $n!$ ordinary permutations, but for $k>1$ there are
few results. OEIS has entries A114938 for $k=2$, where an expression 
and a three-term recurrence is given, and A193638 for $k=3$,
but with no formula and only a conjectured recurrence. 

Let $A_k(n)$ be the set of Carlitz multipermutations of
$1^k,2^k,\ldots,n^k$ and let $a_k(n)=|A_k(n)|$. The simplest 
examples are 
\begin{flalign*}
A_2(2) & =\{1212,2121\},\ \ a_2(2)=2 &\\  
A_2(3) & =\{121323,123123,123132,123213,123231,\ldots\},\ \ a_2(3)=30
\end{flalign*}

\begin{table}[h]
\caption{Number of Carlitz mutipermutations}
\begin{tabular}{|r|r|r|r|r|r|r|r|}
\hline
$a_k(n)$ & $n=0$ & $n=1$ & $n=2$ & $n=3$ & $n=4$ & $n=5$ & $n=6$\\
\hline\hline
$k=1$ & 1 & 1 & 2 & 6 & 24 & 120 & 720\\
\hline
$k=2$ & 1 & 0 & 2 & 30 & 864 & 39 480 & 2 631 600\\
\hline
$k=3$ & 1 & 0 & 2 & 174 & 41 304 & 19 606 320 & 16 438 575 600\\
\hline
\end{tabular}
\end{table}
\noindent
The numbers grow very fast. An upper bound is of course 
$(k n)!/(k!)^n$, the number of all multipermutations.

We see that $A_2(2)$ has two elements, but only one {\em pattern}, $xyxy$.
If we identify elements with the same pattern, we get a smaller set 
$A_k^\prime(n)$. Every pattern may be realized in $n!$ ways as a 
multipermutation, so $a_k^\prime(n)=a_k(n)/n!$ as the examples show.
\begin{flalign*}
A_2^\prime(2) & =\{1212\},\ \ a_2^\prime(2)=1&\\   
A_2^\prime(3) & =\{121323,123123,123132,123213,123231\},\ \ a_2^\prime(3)=5
\end{flalign*}

\noindent
As representative we choose the {\em ordered multipermutation},
where the elements
appear in order. For any pattern, such as $zyzxyxyxz$, the order condition
determines what numeral each letter represents, in this case $121323231$.

Sometimes, it seems more natural to work with $a_k^\prime(n)$, sometimes
$a_k(n)$ is more convenient. OEIS has entries A278990 for $a_2^\prime(n)$, with 
formula and a three-term recurrence, and A190826 for  $a_3^\prime(n)$ with
no formula and an only conjectured recurrence.
\begin{table}[hbt]
\caption{Number of ordered Carlitz mutipermutations}
\begin{tabular}{|r|r|r|r|r|r|r|r|}
\hline
$a_k^\prime(n)$ & $n=0$ & $n=1$ & $n=2$ & $n=3$ & $n=4$ & $n=5$ & $n=6$\\
\hline\hline
$k=1$ & 1 & 1 & 1 & 1 & 1 & 1 & 1\\
\hline
$k=2$ & 1 & 0 & 1 & 5 & 36 & 329 & 3 655\\
\hline
$k=3$ & 1 & 0 & 1 & 29 & 1 721 & 163 386 & 22 831 355\\
\hline
\end{tabular}
\end{table}
\noindent

\section{Inclusion-exclusion formulas}
\noindent
Computing $a_2(n)$ by inclusion-exclusion is Example 2.2.3 in 
\cite{Stanley}. We show the method for $a_2(3)=30$. 

To begin with, there are $6!/2^3=90$ multipermutations of $1\,1\,2\,2\,3\,3$.
We subtract all containing the subpattern $1\!1$, i.e.~multipermutations of 
the five symbols $1\!1\,2\,2\,3\,3$. These are $5!/2^2$. The same goes for 
$2\!2$
and $3\!3$ so we subtract $\binom{3}{1}5!/2^2=90$. Patterns with both $1\!1$
and $2\!2$ were subtracted twice, so we add $4!/2^1$ for every such 
pair, totalling $\binom{3}{2}4!/2^1=36$. Finally, patterns with all three 
$1\!1$, $2\!2$, $3\!3$ must be subtracted, that is $\binom{3}{3}3!/2^0=6$.

The general formula looks like this.
\begin{propo}
\[
a_{2}(n)=\mathlarger{\sum}_{s+t=n}\Big[(-1)^{t}{n\choose s}
\frac{(2s+t)!}{(2!)^s}\Big]
\]
\end{propo}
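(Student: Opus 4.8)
The plan is to carry out exactly the inclusion–exclusion computation sketched above for $a_2(3)$, but now for general $n$. First I would set up the ground set: all multipermutations of $1^2,2^2,\ldots,n^2$, of which there are $(2n)!/(2!)^n$. For each $i\in\{1,\ldots,n\}$ let $B_i$ denote the set of such multipermutations in which the two copies of $i$ are adjacent (i.e.\ form the block $ii$); a Carlitz multipermutation is precisely one lying in no $B_i$, so $a_2(n)=\big|\bigcap_i B_i^c\big|$, and the Bonferroni/inclusion–exclusion identity gives $a_2(n)=\sum_{T\subseteq\{1,\ldots,n\}}(-1)^{|T|}\big|\bigcap_{i\in T}B_i\big|$.

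Next I would evaluate $\big|\bigcap_{i\in T}B_i\big|$ when $|T|=t$. Glueing each pair $ii$ for $i\in T$ into a single super-letter, a multipermutation in the intersection is the same as an arrangement of: $t$ distinct glued letters (each used once) together with $s=n-t$ remaining values each appearing twice. That is a multiset of size $2(n-t)+t = 2s+t$ objects, and the number of distinct arrangements is $(2s+t)!/(2!)^s$ — the glued letters contribute no repetition factor, while each of the $s$ unglued values still contributes a $2!$. This count depends only on $t$ (equivalently on $s$), so summing over the $\binom{n}{t}=\binom{n}{s}$ subsets $T$ of that size yields
\[
a_2(n)=\sum_{t=0}^{n}(-1)^{t}\binom{n}{s}\frac{(2s+t)!}{(2!)^s},
\qquad s=n-t,
\]
which is the claimed formula after reindexing by $s+t=n$.

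The only genuinely delicate point is justifying that glueing $ii$ into a single letter is a bijection onto the arrangements of the reduced multiset — in particular that we are not over- or undercounting when several pairs are simultaneously glued, and that the $(2!)^s$ denominator is exactly right (the glued letters are treated as distinct single symbols, so they get no factorial in the denominator, whereas the $s$ surviving doubled values each keep one). I expect this to be the main obstacle to write carefully, though it is conceptually routine; once it is nailed down, the rest is bookkeeping. I would also remark that the identity specializes correctly at small $n$: for $n=3$ it reproduces $90-90+36-6=30$, matching the worked example, which serves as a sanity check.
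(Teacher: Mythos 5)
Your proof is correct and follows exactly the inclusion--exclusion argument the paper sketches (via the $a_2(3)$ example and the remark interpreting $s$ and $t$): the events $B_i$, the gluing bijection for the intersections, and the count $(2s+t)!/(2!)^s$ all match. The gluing step is indeed unproblematic here since with only two copies of each value a pair is adjacent in at most one way, so there is no overcounting of the kind that complicates the $k=3$ case.
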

\noindent
The sum is to be taken over nonnegative $s,t$ that add upp to $n$.
Here $s$ counts symbols that are separate, like $..x..x..$, and $t$ 
counts sybols that appear together, 
like $..xx..$, so there are $2s+t$ blocks to permute and $s$ 
indistinguishable pairs.

The case $k=3$ is trickier as we now have three subpatterns to consider.
If $s$ of the symbols appear separated, like $.x.x.x.$, $t$ of the symbols
appear two-plus-one, like $.xx.x.$, and $u$ of the symbols appear united,
like $.xxx.$, inclusion-exclusion will produce a surprisingly simple formula.
A more thorough treatment is given in Martin's thesis \cite{Martin}.
\begin{theo}
\[
a_{3}(n)=\mathlarger{\sum}_{s+t+u=n}\Big[(-1)^{t}{n\choose {s,t,u}}
\frac{(3s+2t+u)!}{(3!)^s}\Big]
\]
\end{theo}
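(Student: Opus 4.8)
The plan is to prove the formula by inclusion--exclusion on the equal adjacent pairs, organised so that the summand is indexed by how each numeral's three copies get clustered.

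Call a \emph{bond} of a multipermutation $w$ of $1^3,\dots,n^3$ a position at which two equal letters sit side by side; for the numeral $i$ there are $0$, $1$ or $2$ such bonds according as the three copies of $i$ form the run pattern $1{+}1{+}1$, $2{+}1$ (equivalently $1{+}2$), or $3$. Writing $\mathrm{Bonds}(w)$ for the set of all bonds of $w$, the word $w$ is Carlitz exactly when $\mathrm{Bonds}(w)=\emptyset$, so from $\sum_{S\subseteq T}(-1)^{|S|}=[\,T=\emptyset\,]$ we get
\[
[\,w\text{ is Carlitz}\,]=\sum_{S\subseteq \mathrm{Bonds}(w)}(-1)^{|S|}.
\]
Summing over all $w$ and exchanging the two sums,
\[
a_3(n)=\sum_{(w,S)}(-1)^{|S|},
\]
where the sum ranges over all pairs $(w,S)$ with $w$ a multipermutation of $1^3,\dots,n^3$ and $S$ a set of bonds of $w$.

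The next step is to re-encode such a pair. A set $S$ of bonds is a \emph{gluing instruction}: for each numeral $i$ it records which of the (at most two) cut points between the consecutive copies $i_1,i_2,i_3$ are fused. Declare $i$ to be of type $s$, $t$ or $u$ according as $S$ fuses $0$, $1$ or $2$ of these cut points. A type-$s$ numeral then contributes three singleton blocks $i,i,i$; a type-$t$ numeral contributes one block of size $2$ together with one singleton (namely $i_1i_2$ and $i_3$, or $i_1$ and $i_2i_3$, depending on which cut point was fused); and a type-$u$ numeral contributes a single block $iii$. Conversely, given an assignment of types and a linear arrangement of all the induced blocks, one recovers $w$ by concatenation and recovers $S$ by marking precisely the fused cut points; a type-$s$ numeral may well end up with adjacent copies in $w$, but those bonds are simply absent from $S$, so no constraint is placed on the arrangement --- the inclusion--exclusion performs the correction automatically. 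This is a bijection between the pairs $(w,S)$ and the triples (type function on $[n]$, linear arrangement of the induced blocks).

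It remains to count. Fix a type function with $s$, $t$, $u$ numerals of the three types, $s+t+u=n$; there are $\binom{n}{s,t,u}$ such functions. The induced multiset of blocks has $3s+2t+u$ members, giving $(3s+2t+u)!$ linear arrangements; but the three singleton blocks of each type-$s$ numeral are mutually indistinguishable, so we divide by $(3!)^s$, and --- crucially --- there is \emph{no} further division for the type-$t$ numerals, since their two blocks have different sizes ($2$ and $1$) and hence are distinguishable. Finally $|S|=0\cdot s+1\cdot t+2\cdot u=t+2u$, so the sign is $(-1)^{t}$. Summing over the type function and then over $(s,t,u)$ yields the asserted identity. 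The crux I expect is the bijection together with its symmetry bookkeeping: one must be sure that allowing a type-$s$ numeral's copies to be adjacent in $w$ causes no double counting (it does not, since $S$ is then forced to omit those bonds) and that the size-$2$/size-$1$ pair of a type-$t$ numeral must \emph{not} be symmetrised --- getting either wrong alters the $(3!)^s$ denominator or the range of $(s,t,u)$ and breaks the formula. A check on $n=0,1,2$, where the right-hand side evaluates to $1,0,2$, confirms the sign and the denominator.
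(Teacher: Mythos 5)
Your proof is correct and follows essentially the same route as the paper: inclusion--exclusion over adjacent equal pairs, with $s$, $t$, $u$ recording whether each numeral's three copies are split $1{+}1{+}1$, $2{+}1$, or kept as a single run, and the sign $(-1)^{t+2u}=(-1)^t$. Your bond-set/gluing formulation is a more careful and fully rigorous rendering of the paper's informal argument (which handles the $1\!1\,1$ versus $1\,1\!1$ overcount by an ad hoc secondary correction), and your bookkeeping of why only the $(3!)^s$ symmetry is divided out is exactly right.
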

\begin{proof}
A direct application of inclusion-exclusion would be possible if we knew
how many multipermutations contain $1\!1$, how many contain $1\!1$ and $2\!2$
etc. The $t=2, u=0$ counts permutations of blocks, some of length 1 and some
of length 2, for example $1\!1$ and $2\!2$. This will produce all desired
multipermutations, but some of them will be counted twice, for $1\!1 1$ is
the same sequence as $1 1\!1$. So we must subtract permutations where the
ones are united, and this explains the term $t=1, u=1$. But now again we
must add permutations with both $1\!1\!1$ and $2\!2\!2$ and this explains
the term $t=0, u=2$.  
\end{proof}

Let us try to compute $a_3(3)=174$ with the formula.   
$$1\cdot\frac{9!}{6^3}-3\cdot\frac{8!}{6^2}+3\cdot\frac{7!}{6^2}+3\cdot\frac{7!}{6}-
6\cdot\frac{6!}{6}+3\cdot\frac{5!}{6}-1\cdot\frac{6!}{1}+3\cdot\frac{5!}{1}-
3\cdot\frac{4!}{1}+1\cdot\frac{3!}{1}=174$$

\bigskip\noindent
It is easy to write down similar formulas for $k\ge 4$. We just give $k=4$
as an example. The proof has no new twists, so we omit it. Just note that 
$v$ and $w$ count $xx..xx$ resp.~$xxxx$.
\begin{theo}\label{fourformula}
\[
a_{4}(n)=\mathlarger{\sum}_{s+t+u+v+w=n}\Big[(-1)^{t+w}
{n\choose {s,t,u,v,w}}
\frac{(4s+3t+2u+2v+w)!}{(4!)^s(2!)^{v+t}}\Big]
\]
\end{theo}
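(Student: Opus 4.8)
The plan is to run inclusion--exclusion over \emph{clump types}, exactly as in the proof of the $k=3$ case, and then to verify one generating-function identity. A clump type is a partition $\lambda$ of $4$; the five types $1{+}1{+}1{+}1$, $2{+}1{+}1$, $3{+}1$, $2{+}2$, $4$ are counted by $s,t,u,v,w$ respectively. Given $(s,t,u,v,w)$ summing to $n$, there are $\binom{n}{s,t,u,v,w}$ ways to assign a type to each of the $n$ symbols, and a symbol of type $\lambda$ is replaced by $\ell(\lambda)$ \emph{blocks} of the prescribed sizes (for instance a type-$2{+}1{+}1$ symbol becomes three blocks of sizes $2,1,1$). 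In all there are $N:=4s+3t+2u+2v+w$ blocks, and the number of distinct linear arrangements of them is $N!/\bigl((4!)^{s}(2!)^{t+v}\bigr)$, because two blocks of \emph{equal size belonging to the same symbol} are interchangeable: four size-$1$ blocks for each type-$s$ symbol, a pair of size-$1$ blocks for each type-$t$ symbol, a pair of size-$2$ blocks for each type-$v$ symbol, and nothing interchangeable for types $u$ and $w$. Thus, up to the sign $(-1)^{t+w}$, each bracketed summand counts precisely these block arrangements, and flattening an arrangement (deleting the block boundaries) produces a multipermutation of $1^4,\dots,n^4$. The theorem is therefore equivalent to the claim that, for a fixed multipermutation $w_0$, summing the signs over all clump arrangements flattening to $w_0$ gives $1$ if $w_0\in A_4(n)$ and $0$ otherwise.

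To prove this I would fix $w_0$ and group the arrangements flattening to it by symbol. For symbol $i$, let $\mu^{(i)}_1,\dots,\mu^{(i)}_{r_i}$ be the lengths of its maximal runs in $w_0$ (a composition of $4$). Since equal same-symbol blocks are indistinguishable, an arrangement flattening to $w_0$ is the same datum as a \emph{cutting}: a choice, for each symbol $i$ and each maximal run $\mu^{(i)}_j$, of a composition $\gamma^{(i)}_j$ of $\mu^{(i)}_j$ (the block sizes inside that run, read left to right). The type of symbol $i$ is then the multiset of all parts of $\gamma^{(i)}_1,\dots,\gamma^{(i)}_{r_i}$, and the sign of the term containing this cutting factors over blocks: writing $\varepsilon(\lambda)=(-1)^{\,4-\ell(\lambda)}=\prod_{p\in\lambda}(-1)^{p-1}$, one checks $\varepsilon$ equals $-1$ exactly on $2{+}1{+}1$ and $4$, and $\prod_i\varepsilon(\lambda_i)=(-1)^{\sum_i(4-\ell(\lambda_i))}=(-1)^{t+2u+2v+3w}=(-1)^{t+w}$. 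Hence the signed count of $w_0$ equals
\[
\prod_{i=1}^{n}\ \prod_{j=1}^{r_i}\ \Bigl(\ \sum_{\gamma\,\models\,\mu^{(i)}_j}\ \prod_{p\in\gamma}(-1)^{p-1}\ \Bigr),
\]
where $\gamma\models m$ means $\gamma$ runs over the compositions of $m$, so everything reduces to the single sum $C(m):=\sum_{\gamma\models m}\prod_{p\in\gamma}(-1)^{p-1}$.

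Finally I would evaluate $C(m)$ by a generating function:
\[
\sum_{m\ge 0}C(m)\,x^{m}=\sum_{r\ge 0}\Bigl(\sum_{p\ge 1}(-1)^{p-1}x^{p}\Bigr)^{\!r}=\frac{1}{\,1-\dfrac{x}{1+x}\,}=1+x,
\]
so $C(1)=1$ and $C(m)=0$ for $m\ge 2$. Therefore the displayed product is $1$ exactly when every maximal run of every symbol in $w_0$ has length $1$ --- i.e.\ exactly when $w_0$ is Carlitz --- and $0$ otherwise, and summing over $w_0$ yields $a_4(n)$. There is no hard step; the one place that needs care is the bookkeeping of the first paragraph, namely that the normalising factor is really $(4!)^{s}(2!)^{t+v}$ (only equal-size blocks of one and the same symbol coincide under relabelling, so a type-$t$ symbol's size-$2$ and size-$1$ blocks do \emph{not}) and that the product of the per-block signs $(-1)^{p-1}$ reproduces exactly the prefactor $(-1)^{t+w}$ in the statement; the same remarks, mutatis mutandis, also give the $k=2$ Proposition and the $k=3$ Theorem.
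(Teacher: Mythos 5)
Your proof is correct, and it is worth noting that the paper actually \emph{omits} the proof of this theorem, saying only that it ``has no new twists'' relative to the $k=3$ case, whose own proof is an informal iterated correction (``count block permutations, notice $1\!1\,1$ and $1\,1\!1$ coincide, subtract, notice the new overcount, add back, \ldots''). You organize the same underlying inclusion--exclusion differently and more rigorously: you fix a target multipermutation $w_0$, observe that the arrangements flattening to it factor over the maximal runs of each symbol as independent choices of a composition, and reduce everything to the single identity
\[
\sum_{m\ge 0}\Bigl(\sum_{\gamma\text{ a composition of }m}\ \prod_{p\in\gamma}(-1)^{p-1}\Bigr)x^{m}
=\frac{1}{1-\frac{x}{1+x}}=1+x,
\]
so that the signed multiplicity of $w_0$ is $1$ or $0$ according as $w_0$ is Carlitz or not. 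This is essentially the cluster/generating-function cancellation that the paper attributes to Taylor, recast combinatorially; it buys a uniform proof for all $k$ at once (your closing remark about $k=2,3$ is right, since for general $k$ the sign $\prod_p(-1)^{p-1}=(-1)^{k-\ell(\lambda)}$ and the normalizer $\prod_j m_j(\lambda)!$ reproduce each stated formula), whereas the paper's step-by-step correction would need a new layer of bookkeeping for each new partition of $k$. Your bookkeeping checks out in detail: the sign $(-1)^{t+2u+2v+3w}=(-1)^{t+w}$, the block count $4s+3t+2u+2v+w$, and the symmetry factor $(4!)^{s}(2!)^{t+v}$ (only equal-size blocks of the same symbol being interchangeable) all match the statement.
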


\noindent
We were able to give each term a combinatorial interpretation but the formulas
are not new. Ira Gessel \cite{Gessel} used rook polynomials to derive more general 
expressions than these and Jair Taylor \cite{Taylor} proved the same
formulas directly from the generating function. Their elegant version
of Th.\ref{fourformula} is
$$
a_4(n)=\Phi((\frac{t^3}{6}-t^2+t)^n)\ ,
$$
where $\Phi(t^n)=n!$, so after expansion each power of $t$ is replaced
with a factorial. 

\section{Recurrences}
\noindent
For many purposes,
recurrences are superior to the explicit formulas of last section. We
will show how to get recurrences for $a_k^\prime(n)$, the number of ordered
Carlitz multipermutations. Recall that $a_k^\prime(n)=a_k(n)/n!$.

The OEIS \cite{OEIS} gives conjectural three-term recurrences for 
$a_2(n)$ and $a_2^\prime(n)$, a four-term recurrence for $a_3(n)$
and a five-term recurrence for $a_3^\prime(n)$. All these conjectures 
will be proved below.

\begin{theo}\label{tworec}
The sequence $p_n$, recursively defined by
$$p_{n+1}=(2n+1)p_{n}+p_{n-1}, \ p_0=1,\ p_1=0,$$
counts ordered Carlitz words of $1^2,\ldots,n^2$.
\end{theo}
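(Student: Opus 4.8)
The plan is to pass immediately from the combinatorial definition to the analytic ``moment'' representation already visible in Section~2, and then extract the recurrence by integration by parts. First I would record the $k=2$ analogue of the Gessel--Taylor identity: expanding $\bigl(\tfrac{t^{2}}{2}-t\bigr)^{n}$ by the binomial theorem and applying the operator $\Phi$ (extended linearly from $\Phi(t^{m})=m!$) reproduces term by term the closed form for $a_{2}(n)$ of Proposition~2.1, so that
\[
a_{2}(n)=\Phi\Bigl(\bigl(\tfrac{t^{2}}{2}-t\bigr)^{n}\Bigr),
\qquad\text{equivalently}\qquad
p_{n}:=a_{2}^{\prime}(n)=\frac{1}{n!}\,\Phi\bigl(g^{n}\bigr),\quad g:=\tfrac{t^{2}}{2}-t .
\]
The only property of $\Phi$ I shall need is the formal integration-by-parts rule $\Phi(f^{\prime})=\Phi(f)-f(0)$, which is immediate from $\Phi(t^{m})=m!$ (or from the reading $\Phi(f)=\int_{0}^{\infty}f(t)e^{-t}\,dt$ on polynomials).

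Set $I_{n}=\Phi(g^{n})=n!\,p_{n}$ and introduce the auxiliary sequence $K_{n}=\Phi(t\,g^{n})$. Applying the rule to $f=g^{n+1}$ and using $g(0)=0$ gives $I_{n+1}=(n+1)\Phi(g^{n}g^{\prime})$; since $g^{\prime}=t-1$ this reads
\[
I_{n+1}=(n+1)\bigl(K_{n}-I_{n}\bigr).
\]
Applying the rule instead to $f=t\,g^{n}$ gives $K_{n}=\Phi\bigl(g^{n}+n\,t\,g^{\prime}\,g^{n-1}\bigr)$, and here the decisive algebraic fact is that $g$ is quadratic, so that $t\,g^{\prime}=t(t-1)=2g+t$; substituting this collapses the right-hand side and yields
\[
K_{n}=(2n+1)\,I_{n}+n\,K_{n-1}.
\]

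It then remains only to eliminate the auxiliary sequence: solving the first relation for $K_{n}$ and $K_{n-1}$ and inserting into the second gives, after one line of simplification, $I_{n+1}=(n+1)(2n+1)I_{n}+n(n+1)I_{n-1}$; dividing by $(n+1)!$ and using $n(n+1)/(n+1)!=1/(n-1)!$ turns this into $p_{n+1}=(2n+1)p_{n}+p_{n-1}$. Together with $p_{0}=\Phi(1)=1$ and $p_{1}=\Phi\bigl(\tfrac{t^{2}}{2}-t\bigr)=1-1=0$ this matches the stated initial data, and a second-order recurrence with two prescribed initial values has a unique solution, so $p_{n}=a_{2}^{\prime}(n)$ for every $n$. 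I expect the only genuine obstacle to be choosing the right framework: the naive attempt to build ordered Carlitz words on $n$ letters by inserting the two new copies into such words on fewer letters does not close up into a clean recursion (deleting the two largest letters can create adjacent equal pairs, and their placement is constrained by the order condition), which is what forces one over to the $\Phi$-calculus; within that calculus the whole argument turns on spotting the auxiliary sequence $K_{n}$ and the reduction $t\,g^{\prime}=2g+t$, after which everything is routine bookkeeping.
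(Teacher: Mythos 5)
Your proof is correct, but it takes a genuinely different route from the paper's. You derive the recurrence analytically from the inclusion--exclusion formula of Proposition~2.1, rewritten as $a_2(n)=\Phi\bigl((\tfrac{t^2}{2}-t)^n\bigr)$, and then manufacture the recurrence by the integration-by-parts rule $\Phi(f')=\Phi(f)-f(0)$ together with the key reduction $t\,g'=2g+t$; all of these steps check out (including the elimination of $K_n$ and the initial values). The paper instead gives a direct combinatorial proof: it realizes $p_{n+1}$ as the count of ordered Carlitz words on $0^2,1^2,\ldots,n^2$ and classifies them by the role of the second zero --- removable (giving $2np_n$), sandwiched as $x0x$ with $x>1$ (giving $p_n$ via an explicit transformation), or part of a leading $0101$ (giving $p_{n-1}$) --- so that $p_{n+1}=2np_n+p_n+p_{n-1}$ with no auxiliary sequence at all. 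Your approach is shorter and more mechanical, essentially the Gessel--Taylor $\Phi$-calculus the paper cites after Theorem~2.3, and it scales routinely to higher $k$ by replacing $g$ with the appropriate polynomial; but it is not self-contained (it leans on Proposition~2.1) and the auxiliary quantity $K_n=\Phi(tg^n)$ has no stated combinatorial meaning. The paper's bijective classification is what generalizes to the coupled systems of Theorems~3.2 and~3.4, where the auxiliary sequences $q_n,r_n$ are themselves counts of Carlitz words with a deficient letter, so the combinatorial route is the one that carries the structural information the rest of the paper needs. Both proofs are valid; yours is a legitimate independent verification.
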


\begin{proof}
As $p_n=a_2^\prime(n)$, $p_2=1$ counts the word {\tt 1212} and 
$p_3=5$ counts the words
{\tt 010212,012012,012102,012120,012021}, using symbols {\tt 012}. 
The first four words are of the type {\tt 0..\^{0}.}, that is the zero 
may be removed without violating the Carlitz property,
but the fifth word is of the type {\tt 0..x0x.}. 

\noindent
Now, we count words in $ 0^2,1^2,\ldots,n^2$. according to type.

\smallskip\noindent
{\tt 0..\^{0}.} is counted by $2n p_n$ (insert {\tt \^{0}} anywhere).

\smallskip\noindent
{\tt 0..x0x} for $x>1$ is counted by $p_n$ (transform {\tt 1..1 $\mapsto$  0..x0x}).

\smallskip\noindent
{\tt 0101..} is counted by $p_{n-1}$ (prefix {\tt 0101}).
\end{proof}
\noindent
In our example, {\tt 1212 $\mapsto$  02x0x2}, which is the same pattern as {\tt 012021}.

\medskip\noindent
\begin{theo}\label{threerec}

The sequences $p_n, q_n$, recursively defined by
$$
\begin{array}{rcll}
2p_{n+1}&=&(3n+3)q_n-2(3n+1)p_{n}+2p_{n-1},& p_0=1,\ p_1=0,\\
q_{n}&=&(3n+2)p_{n}+2q_{n-1},& q_0=0,
\end{array}
$$
count ordered Carlitz words of $1^3,\ldots,n^3$
resp.~of $0^2,1^3,\ldots,n^3$.
\end{theo}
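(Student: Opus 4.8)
The plan is to give $p_n$ and $q_n$ their stated combinatorial meaning --- $p_n=a_3'(n)$ is the number of ordered Carlitz words of $1^3,\ldots,n^3$, and $q_n$ is the number of ordered Carlitz words of $0^2,1^3,\ldots,n^3$ --- check the base cases $p_0=1$, $p_1=0$, $q_0=0$ directly, and then prove the two recurrences by classifying words according to the behaviour of the symbol $0$, in the spirit of the proof of Theorem~\ref{tworec}. Such a word always begins with $0$, and the relevant dichotomy for a given copy of $0$ is whether it is \emph{deletable} --- it sits at the end of the word, or its two neighbours differ --- or it sits inside a block $x0x$. The two sequences are forced to be treated together because deleting a non-leading deletable $0$ from a word counted by $q_n$, or by $p_{n+1}$, produces a word with one fewer $0$, which is again ``one doubled symbol among otherwise tripled ones'' (a $q$-word) or ``a leading singleton $0$ among tripled ones'' (a $p$-word with a $0$ prepended).

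I would establish the $q$-recurrence first, as it models the harder one. Take an ordered Carlitz word of $0^2,1^3,\ldots,n^3$. If its second $0$ is deletable, delete it to get $0v$ with $v$ an ordered Carlitz word of $1^3,\ldots,n^3$; reinserting a $0$ in any of the $3n$ gaps not touching the leading $0$ is always legal and produces exactly such a word, so this bijection accounts for $3n\,p_n$ of them. If the second $0$ lies in a block $x0x$, split on $x=1$ versus $x>1$: the ``ordered'' condition forces the second letter of the word to be $1$, which rigidifies the global shape, and a block transform of the type {\tt 1..1}$\mapsto${\tt 0..x0x} from Theorem~\ref{tworec}, together with stripping a short initial segment in the $x=1$ branch, puts these words in bijection with $p_n$- and $q_{n-1}$-instances, accounting for the remaining $2p_n+2q_{n-1}$. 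This gives $q_n=(3n+2)p_n+2q_{n-1}$.

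For $p_{n+1}$ I would run the same programme on ordered Carlitz words $W$ of $0^3,1^3,\ldots,n^3$, three $0$'s in general position, keeping track of $d(W)$, the number of \emph{non-leading} deletable $0$'s of $W$ (so $0\le d(W)\le 2$). The delete-and-reinsert bijection above, applied to the second $0$ of a $q_n$-word, shows that the number of pairs $(W,\text{a non-leading deletable }0)$ is $(3n-1)q_n$, i.e.\ $\sum_W d(W)=(3n-1)q_n$; applied to \emph{two} such $0$'s of a $0$-prepended $p_n$-word, it gives $\#\{W:d(W)=2\}=\binom{3n}{2}p_n$. Therefore
\[
p_{n+1}=\#\{W:d(W)=0\}+(3n-1)q_n-\tbinom{3n}{2}p_n,
\]
and it remains to count the \emph{rigid} words, those whose two non-leading $0$'s both sit in blocks $x0x$ (possibly overlapping into $x0x0x$). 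This count comes from a further case analysis --- by the colour $x=1$ versus $x>1$ and by stripping an initial $0101$-type segment, as for the $p_{n-1}$ term in Theorem~\ref{tworec}, with an inclusion--exclusion correction for words carrying two such blocks --- and, once rewritten with the help of the $q$-recurrence, collapses everything to $p_{n+1}=\tfrac{3(n+1)}{2}q_n-(3n+1)p_n+p_{n-1}$, whose cleared-denominator form is exactly $2p_{n+1}=(3n+3)q_n-2(3n+1)p_n+2p_{n-1}$.

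The main obstacle is precisely this count of rigid words, and more generally making the $p_{n+1}$ analysis a true partition: with three $0$'s the ``deletable'' condition and several ``$x0x$'' / ``$x0x0x$'' patterns can co-occur, so one must fix a canonical copy of $0$ to peel off and verify that each inverse insertion map lands in exactly the intended class. That bookkeeping, rather than any one trick, is what produces the coefficients $3n+3$, $-2(3n+1)$, $+2$; a practical safeguard while carrying it out is to test the identity for $n=1,2,3$ against the forced values $p_2=1$, $p_3=29$, $p_4=1721$ and $q_1=0$, $q_2=8$, $q_3=335$. If the combinatorics proves too delicate, a fallback is to guess the pair of recurrences, eliminate $q_n$ to obtain a single linear recurrence for $p_n=a_3(n)/n!$, and verify that one from the explicit formula for $a_3(n)$ derived earlier by creative telescoping, which must succeed since that sum is holonomic.
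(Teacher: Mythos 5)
Your framework is the paper's: give $q_n$ the stated combinatorial meaning, prove its recurrence by classifying the second $0$, and get the $p$-recurrence by peeling zeros off words of $0^3,1^3,\ldots,n^3$. The $q$-line is handled correctly and at essentially the paper's level of detail ($3n\,p_n$ deletable cases, $2p_n$ for $x0x$ with $x>1$, $2q_{n-1}$ for the $101$ case). Your two bijections for the $p$-step are also right: a word of $0^2,1^3,\ldots,n^3$ has $3n+3$ gaps of which exactly $4$ touch a zero, so $\sum_W d(W)=(3n-1)q_n$, and $\#\{W:d(W)=2\}=\binom{3n}{2}p_n$. But the argument stops exactly where the theorem lives. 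Your identity reduces everything to $\#\{W:d(W)=0\}$, the words whose two non-leading zeros both sit inside $x0x$ blocks, and you only assert that a further case analysis ``collapses'' this to the right value. Unwinding your own reduction, what must be proved is
\[
\#\{W:d(W)=0\}=\tfrac{5-3n}{2}\,q_n+\tfrac{9n^2-9n-2}{2}\,p_n+p_{n-1}
\]
(equal to $4$ for $n=2$, consistent with $29=4+40-15$), and nothing in the proposal produces these coefficients. Establishing them requires analysing how two rigid blocks coexist --- the overlapping configuration $x0x0x$, a $101$ block together with a $y0y$ block, and the constraints the ordering convention places on where the letter $1$ first appears --- and that co-occurrence analysis is precisely where $(3n+3)$, $-2(3n+1)$ and $+2$ come from. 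As written, the proof is a correct reduction plus an unproved claim.

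It is worth seeing why the paper never faces this joint statistic. Instead of partitioning words by $d(W)$, it sums over the two non-leading zeros of each word, so its cases total $2p_{n+1}$ and every case is a count of \emph{pairs} (word, distinguished zero) of a given type: $(3n-1)q_n$ deletable pairs, $2(q_n-p_n-q_{n-1})$ pairs with the distinguished zero in $x0x$, $x>1$, and the $101$ pairs subdivided by the position of the remaining zero. Per-pair accounting only ever needs one zero's type at a time, so it sidesteps $\#\{d=0\}$ and $\#\{d=2\}$ entirely; your reorganisation, though clean for the terms you did compute, makes the residual work strictly harder than the paper's version. Finally, your fallback (eliminate $q_n$ and certify the resulting holonomic recurrence against Theorem 2.2 by creative telescoping) could in principle close the gap for the $p$-statement, but it does not by itself prove the combinatorial interpretation of $q_n$ asserted in the theorem; you would still need your bijective proof of the $q$-line, and then an argument that the eliminated recurrence plus the $q$-line imply the displayed coupled system.
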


\begin{proof}
$p_2=1$ counts the word {\tt 121212} and 
$q_2=8$ counts the words
{\tt 01\^{0}21212,\ldots,0121212\^{0},01202121,01212021}.
The first six words of the type {\tt 0..\^{0}.} are counted by $3n p_n$, 
the last two {\tt 0..x0x..x.} and {\tt 0..x..x0x.} with $x>1$ by
$2p_n$. Finally,
{\tt 0101..1.} and {\tt 01..101.} are counted by $2q_{n-1}$.
This proves the recurrence for $q_n$.

We now count $p_{n+1}$ by cases according to type of $\tt 0$. As there are
two noninitial zeros, the cases will sum to $2p_{n+1}$.\\
\smallskip\noindent
{\tt 0..0..\^0.} is counted by $(3n-1)q_n$ (insert {\tt\^0} in empty slot).
 
\smallskip\noindent
{\tt 0..0..x..x0x.} for $x>1$ is counted by $2(q_n-p_n-q_{n-1})$,
for our transformation {\tt 1..0..1..1. $\mapsto$  0..0..x..x0x.} does not work for 
{\tt 101..1.} (counted by $p_n$) or for {\tt 10..1..1.} (counted by $q_{n-1}$) .
  
\smallskip\noindent
{\tt 0101..1..0} and the equinumerous {\tt 01..101..0} split into subcases depending on the 
position of the other zero.\\\smallskip\indent
{\tt 010101..} is counted by $p_{n-1}$.\\
\smallskip\indent
{\tt 01010..1.}, {\tt 0101..01.} and {\tt 0101..10.} are counted by $3q_{n-1}$.\\
\smallskip\indent
{\tt 0101..0..1.}  and {\tt 0101..1..0.} are counted by $2p_n$.

\medskip\noindent
Collecting terms and replacing $2q_{n-1}$ with $q_n-(3n+2)p_n$
we get the recurrence for $p_{n+1}$.
\end{proof}

\noindent

\begin{coro}
The recursively defined sequence
$$
p_{n+1}=\lambda p_n+\mu p_{n-1}+\nu p_{n-2},\  p_0=1,\  p_1=0,\  p_2=1
$$
where $\lambda=(9n^2\!+\!9n\!+\!8)/2\ +2/n $, $\mu=(6n\!+3) -4/n $, $\nu=-2-2/n$
counts ordered Carlitz words of $1^3,\ldots,n^3$
\end{coro}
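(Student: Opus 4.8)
The plan is to eliminate the auxiliary sequence $q_n$ from the coupled system in Theorem~\ref{threerec}. Since the first recurrence is linear in $q_n$, I would first solve it for $q_n$, obtaining
\[
q_n=\frac{2\,(p_{n+1}+(3n+1)p_n-p_{n-1})}{3(n+1)}.
\]
Replacing $n$ by $n-1$ gives the companion identity
\[
q_{n-1}=\frac{2\,(p_n+(3n-2)p_{n-1}-p_{n-2})}{3n}.
\]
Substituting both expressions into the second recurrence $q_n=(3n+2)p_n+2q_{n-1}$ yields a relation in which only $p_{n+1},p_n,p_{n-1},p_{n-2}$ occur --- that is, the desired three-term recurrence.

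The second step is bookkeeping: multiply the resulting identity through by $3n(n+1)$ to clear denominators, gather the coefficients of $p_n$, $p_{n-1}$, $p_{n-2}$, and finally divide by $2n$. One finds that the coefficient of $p_n$ equals $(9n^3+9n^2+8n+4)/(2n)=(9n^2+9n+8)/2+2/n=\lambda$, that of $p_{n-1}$ equals $(12n^2+6n-8)/(2n)=(6n+3)-4/n=\mu$, and that of $p_{n-2}$ equals $-2(n+1)/n=-2-2/n=\nu$. The presence of the $1/n$ terms means the derived recurrence is valid only for $n\ge 2$, which is why the three starting values $p_0,p_1,p_2$ must be supplied.

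It remains to confirm those starting values. We are given $p_0=1$ and $p_1=0$; from $q_0=0$ and the $q$-recurrence, $q_1=(3+2)p_1+2q_0=0$, and then the first recurrence of Theorem~\ref{threerec} at $n=1$ gives $2p_2=(3+3)q_1-2(3+1)p_1+2p_0=2$, so $p_2=1$. (As a check, the derived recurrence at $n=2$ gives $p_3=32p_2+13p_1-3p_0=29=a_3^\prime(3)$.) There is no real obstacle here: once $q_n$ is isolated from the first equation the elimination is forced, and the only thing to watch is the algebra of clearing denominators and collecting like powers of $n$.
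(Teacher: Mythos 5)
Your proposal is correct and is essentially the paper's own argument: both eliminate the auxiliary sequence $q_n$ from the coupled system of Theorem~\ref{threerec} by linear combination, and your coefficients $\lambda,\mu,\nu$ and the check $p_3=29$ all come out right. The only (cosmetic) difference is that you solve the first recurrence explicitly for $q_n$ and $q_{n-1}$ and substitute into the $q$-recurrence, whereas the paper adds $-2-2/n$ times the index-lowered $p$-recurrence so that the $q$-terms collapse via the $q$-recurrence; the two eliminations are the same computation in a different order.
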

\begin{proof}
Lowering indices in Th.\ref{threerec} we get
$$2p_n=3nq_{n-1}-2(3n-2)p_{n-1}+ 2p_{n-2}
$$
Adding $-2-\frac{2}{n}$ times this to the $2p_{n+1}$-recurrence and then using the
$q_n$-recurrence , we get the desired four-term recurrence.
\end{proof}

\bigskip\noindent
The five-term recurrence in OEIS entry A190826 was found by Richard J.~Mathar
using an ansatz with twenty unknown coefficients \cite{Mathar}. It is of course
easily derived by adding two versions of our four-term recurrence, one of them 
with lowered indices. 

The four-term recurrence in OEIS entry A193638 was found by Alois P.~Heintz.
It is now a corollary obtained by multiplication with $(n+1)!$.

\noindent
Recurrences for $a_k^\prime(n)$ with $k>3$ may be derived in exactly the same way.
We state the result for $k=4$ here and leave the details to the reader.

\medskip\noindent
\begin{theo}\label{fourrec}

The sequences $p_n, q_n, r_n$, recursively defined by
$$
\begin{array}{rcl}
3p_{n+1}&=&(4n+1)q_n+3(10q_{n-1}-r_n+4r_{n-1}+(6n+7)p_n+p_{n-1})\\
2q_{n}&=&(4n+6)r_{n}+6r_{n-1}-(16n+6)p_{n}\\
r_{n}&=&(4n+3)p_{n}+3q_{n-1},\ p_0=1,\ p_1=0,\ q_0=0,\ r_0=0
\end{array}
$$
count ordered Carlitz words of $1^4,\ldots,n^4$ resp.~of
$0^3,1^4,\ldots,n^4$, and of
$0^2,1^4,\ldots,n^4$.
\end{theo}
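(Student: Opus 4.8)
The plan is to run the argument of Theorems~\ref{tworec} and~\ref{threerec} one level higher. An ordered Carlitz word on $n+1$ letters of multiplicity $4$ may be written on the alphabet $0^4,1^4,\ldots,n^4$, and, being ordered, it starts with $0$, so it has exactly three \emph{non-initial} copies of the letter $0$. The auxiliary sequences are the same objects when the distinguished letter $0$ instead has multiplicity $3$ (this is $q_n$, with two non-initial $0$'s) or multiplicity $2$ (this is $r_n$, with one non-initial $0$); thus $p_{n+1}$, $q_n$, $r_n$ sit on a ladder indexed by the multiplicity $4,3,2$ of the distinguished letter. The seed values are immediate: $p_0=1$ is the empty word, while $p_1=q_0=q_1=r_0=r_1=0$ because a Carlitz word on a single letter of multiplicity $\ge 2$, or on $\bullet^{\,j}1^{4}$ with $2\le j\le 3$, cannot exist --- too many copies of one letter for the available gaps --- and the remaining small values are then pinned down by the recurrences.

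Each recurrence is proved by classifying a word together with a choice of one of its non-initial zeros --- so the cases sum to $r_n$, to $2q_n$, and to $3p_{n+1}$ respectively --- according to the \emph{type} of the chosen $0$, in the three flavours already seen for $k=2,3$. First, the chosen $0$ may be \emph{removable}; deleting it lowers the multiplicity of $0$ by one and, after also stripping the leading $0$ where appropriate, produces a word counted by the next sequence down the ladder, weighted by the number of admissible gaps in which that $0$ can be reinserted --- this yields the coefficients of size about $4n$, since in $0w$ the two gaps flanking the leading $0$ (and those flanking any other already-present $0$) are the only forbidden ones. Second, the chosen $0$ may be \emph{trapped} strictly between two copies of some letter $x>1$; as $x$ has multiplicity $4$ it has three internal gaps, giving a factor $3$ times a word counted lower on the ladder, corrected by the degenerate configurations on which the substitution $1\cdots1\mapsto 0\cdots x0x$ breaks down, exactly as the term $2(q_n-p_n-q_{n-1})$ arises in Theorem~\ref{threerec}. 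Third, the chosen $0$ may be \emph{bunched} with the leading $0$ (and, for $q_n$ and $p_{n+1}$, with the other non-initial zeros); then deleting a short initial segment --- in the extreme case the whole alternation $\texttt{01010101}$ --- lowers $n$ by one and the word is counted by $q_{n-1}$, $r_{n-1}$, or $p_{n-1}$, the multiplicities of these terms recording the number of admissible prefix shapes.

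For $r_n$ the three cases are clean --- removable contributes $4np_n$, trapped by some $x>1$ contributes $3p_n$, bunched with the leading $0$ contributes $3q_{n-1}$ --- and summing gives $r_n=(4n+3)p_n+3q_{n-1}$ at once. For $q_n$ and for $p_{n+1}$ the same scheme runs, but now with two and three non-initial zeros, so the bunched case fans out into subcases according to the relative positions of the remaining non-initial zeros and the first copies of the letter $1$; collecting the contributions and then rewriting, via the already-proved $r_n$-recurrence (solved for $q_{n-1}$) and $q_n$-recurrence, to eliminate the lowest auxiliary produces the stated closed forms, with the corrections from the trapped and bunched cases supplying the summands such as $-r_n+4r_{n-1}$, $10q_{n-1}$, and $-(16n+6)p_n$.

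The genuine obstacle is the bookkeeping in the bunched case of the top recurrence: with three non-initial zeros there are many ways for them to interleave with the leading $0$ and with the first copies of the letter $1$, and --- just as the naive bijection already failed on $\texttt{101..1}$ and $\texttt{10..1..1}$ in the $k=3$ proof --- several of these subcases need inclusion--exclusion corrections before they biject with $q_{n-1}$, $r_{n-1}$, or $p_{n-1}$. Carrying out that sub-pattern accounting carefully is the heart of the proof; the algebra that follows --- shifting indices and substituting one recurrence into another --- is routine.
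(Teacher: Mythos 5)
Your plan is the right one, and it is the same one the paper has in mind: the paper explicitly states that Theorem~\ref{fourrec} is obtained ``in exactly the same way'' as Theorems~\ref{tworec} and~\ref{threerec} and leaves the details to the reader, so there is no written-out proof to compare against. You have correctly set up the ladder $r_n$, $q_n$, $p_{n+1}$ indexed by the multiplicity $2,3,4$ of the distinguished letter, the device of counting pairs (word, chosen non-initial zero) so that the cases sum to $r_n$, $2q_n$, $3p_{n+1}$, and the three types of zero (removable, trapped by some $x>1$, bunched with the $1$'s near the front). Your derivation of the bottom rung, $r_n=4np_n+3p_n+3q_{n-1}$, is complete and correct, and exactly parallels the $q_n$-recurrence in the proof of Theorem~\ref{threerec}.

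The gap is that for the two recurrences that carry all the content --- $2q_n$ and $3p_{n+1}$ --- you describe the classification but never execute it, and the stated coefficients are precisely what the execution must produce; they cannot be read off from the general scheme. For instance, the removable case contributes $(4n-1)r_n$ towards $2q_n$ (a word counted by $r_n$ has length $4n+2$ and two zeros, forbidding $4$ of the $4n+3$ insertion gaps), whereas the theorem asserts a total coefficient of $4n+6$; the missing $7r_n$, together with the $6r_{n-1}$ and $-(16n+6)p_n$, must emerge from the inclusion--exclusion corrections in the trapped and bunched cases, and checking that they do is the proof, not an afterthought. The same applies a fortiori to $3p_{n+1}$, where the bunched case with three non-initial zeros fans out into many subfamilies; note that in the $k=3$ proof the paper handles this by exhibiting one subfamily in detail and invoking an ``equinumerous'' twin before doubling, a device you will need (in triplicate) but do not mention. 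Since you yourself flag this accounting as ``the heart of the proof'' and defer it, what you have is a correct and well-aligned outline --- essentially at the level of detail the paper itself offers --- rather than a proof; to complete it you should at minimum tabulate every subcase for $2q_n$ and $3p_{n+1}$ with its count and verify that the totals collapse, after eliminating $q_{n-1}$ via the $r_n$-recurrence, to the stated formulas (a numerical check at $n=2,3$ against $p_3$ and $p_4$ would catch any miscount).
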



\begin{thebibliography}{ABC}
\bibitem{Carlitz} L.~Carlitz, Restricted compositions, {\em Fibonacci Quart.} {\bf 14} (1976), 254--264.
\bibitem{Gessel} Ira M.~Gessel, Generalized rook polynomials and orthogonal polynomials. 
In D. Stanton, editor, {\em q-Series and Partitions}, pages 159-176. Springer-Verlag, New York, 1989. 
\bibitem{Stanley} R.~Stanley, {\em Enumerative Combinatorics}, vol. 1,
  Cambridge University Press, Cambridge, 1997.
\bibitem{Taylor} Jair Taylor, Counting words with Laguerre series, {\em Electron. J. Comb.} {\bf 21(2)}, 2014
\bibitem{OEIS} The On-Line Encyclopedia of Integer Sequences, published
electronically at {\em https://oeis.org}
\bibitem{Mathar} R.~J.~Mathar, personal communication, 2015-10-30.
\bibitem{Martin} Alexis Martin, Sequences without equal adjacent elements, Bachelor thesis, 2015.
\end{thebibliography}
\end{document}